\newcommand{\mbP}{\mathbb P}
\newcommand{\mbZ}{\mathbb Z}
\newcommand{\mbC}{\mathbb C}
\newcommand{\oM}{\overline{\mathcal M}}
\newcommand{\og}{\overline g}
\def\cM{{\mathcal{M}}}
\def\oM{{\overline{\mathcal{M}}}}
\renewcommand{\Im}{\mathrm{Im}}
\def\d{{\partial}}
\newcommand{\eps}{\varepsilon}
\newcommand{\DR}{\mathrm{DR}}
\newcommand{\even}{\mathrm{even}}
\newcommand{\ct}{\mathrm{ct}}
\newcommand{\gl}{\mathrm{gl}}
\newcommand{\oa}{\overline{a}}
\newcommand{\ob}{\overline{b}}
\newcommand{\mcP}{\mathcal{P}}
\newcommand{\tlambda}{{\widetilde{\lambda}}}
\newcommand{\hlambda}{{\widehat{\lambda}}}
\newcommand{\Sym}{\mathrm{Sym}}
\newcommand{\td}{{\widetilde{d}}}
\newcommand{\tmu}{{\widetilde{\mu}}}
\newcommand{\hmu}{{\widehat{\mu}}}
\newcommand{\of}{{\overline f}}
\DeclareMathOperator{\tdeg}{\widetilde{\mathrm{deg}}}
\newtheorem{theorem}{Theorem}[section]
\newtheorem{proposition}[theorem]{Proposition}
\newtheorem{lemma}[theorem]{Lemma}
\theoremstyle{definition}
\newtheorem{definition}[theorem]{Definition}
\numberwithin{equation}{section}
\begin{document}

\title[Quadratic DR integrals and the noncommutative KdV hierarchy]{Quadratic double ramification integrals and the noncommutative KdV hierarchy}

\author{Alexandr Buryak}
\address{A. Buryak:\newline 
Faculty of Mathematics, National Research University Higher School of Economics, \newline
6 Usacheva str., Moscow, 119048, Russian Federation; \newline 
Center for Advanced Studies, Skolkovo Institute of Science and Technology, \newline
1 Nobel str., Moscow, 143026, Russian Federation; \newline
Novosibirsk State University, ul. Pirogova, 1, Novosibirsk, 630090, Russian Federation}
\email{aburyak@hse.ru}

\author{Paolo Rossi}
\address{P.~Rossi:\newline Dipartimento di Matematica ``Tullio Levi-Civita'', Universit\`a degli Studi di Padova,\newline Via Trieste 63, 35121 Padova, Italy}
\email{paolo.rossi@math.unipd.it}

\begin{abstract}
In this paper we compute the intersection number of two double ramification cycles (with different ramification profiles) and the top Chern class of the Hodge bundle on the moduli space of stable curves of any genus. These quadratic double ramification integrals are the main ingredient for the computation of the double ramification hierarchy associated to the infinite dimensional partial cohomological field theory given by $\exp(\mu^2 \Theta)$ where $\mu$ is a parameter and~$\Theta$ is Hain's theta class, appearing in Hain's formula for the double ramification cycle on the moduli space of curves of compact type. This infinite rank double ramification hierarchy can be seen as a rank $1$ integrable system in two space and one time dimensions. We prove that it coincides with a natural analogue of the KdV hierarchy on a noncommutative Moyal torus.
\end{abstract}

\date{\today}

\maketitle

\tableofcontents

\section{Introduction}

The main idea of this paper comes from the observation that the double ramification (DR) cycle, i.e. the class in the cohomology of the moduli space of stable curves representing the most natural compactification of the locus of smooth curves whose marked points support a principal divisor \cite{Hai13}, can be seen as a partial cohomological field theory (CohFT) \cite{LRZ15,KM94} with an infinite dimensional phase space.\\

In \cite{Bur15,BR16a,BDGR18} it was shown how to associate to any partial CohFT an integrable hierarchy of Hamiltonian systems of evolutionary PDEs in one space and one time dimensions. The number of dependent variables in these system of PDEs equals the dimension of the phase space of the partial CohFT. This integrable system is called the DR hierarchy and its properties and generalizations (including a quantization which exists in the case of actual CohFTs) where studied in \cite{BR16b,BDGR16,BGR17,BR18}.\\

This paper wants to answer the question: ``what is the DR hierarchy associated to the infinite rank partial CohFT given by the DR cycle?''. As the general construction of the DR hierarchy already involves intersection numbers of a partial CohFT with the top Hodge class and a DR cycle, choosing as CohFT a second DR cycle leads us directly to having to compute intersection numbers of two different DR cycles and the top Hodge class. This is, of course, a question of its own geometric interest and, as we show in Chapter~\ref{section:quadratic} of this paper, it has a very explicit answer.\\

In fact there is a natural deformation of the DR cycle which gives a one-parameter family of partial CohFTs. It comes from Hain's formula \cite{Hai13}, expressing the DR cycle (restricted to the moduli space of stable curves of compact type) as the $g$-th power of (the pullback to the moduli space of stable curves of compact type) of the class of the theta divisor on the universal Jacobian. If we consider instead the exponential of such theta class, putting into play all of its powers, we get a more general but still very explicit infinite rank partial CohFT.\\

After showing in Chapter \ref{section:DR} how to trade the resulting infinite rank DR hierarchy in one space and one time dimensions as a rank $1$ hierarchy in two space and one time dimensions, we set out to compute it explicitly. The main result of this paper is in Chapter \ref{section:ncKdV}, where we show that the DR hierarchy of our infinite rank partial CohFT coincides with the noncommutative KdV hierarchy, the natural generalization of the ordinary KdV hierarchy on a circle to a torus with a noncommutative Moyal structure.\\

One of the applications is that, through our mixed intersection theory and integrable systems techniques, we are able to compute the intersection numbers of the top Hodge class, one DR cycle, any power of the theta class and any power of the psi class at one marked point. Note also that noncommutative integrable systems have never appeared before in the study of the intersection theory on the moduli spaces of curves. Therefore, we expect that our result will provide a new tool for understanding the structure of the cohomology ring of the moduli spaces and, in particular, the structure of the DR cycle, which became the object of an intensive research in recent years (see e.g.~\cite{JPPZ17,HPS19,Pix18}).\\

\noindent{\bf Acknowledgements}. We would like to thank Johannes Schmitt for providing us with an early version of the SageMath package \texttt{admcycles}, presented in \cite{DSZ20}. The program was used for preliminary computational experiments in the early phases of this paper. The work of A.~B. was supported by Mathematical Center in Akademgorodok under agreement No. 075-2019-1675 with the Ministry of Science and Higher Education of the Russian Federation.


\section{Quadratic double ramification integrals}\label{section:quadratic}

For a pair of nonnegative integers $(g,n)$ in the stable range, i.e. satisfying $2g+2-n>0$, let $\oM_{g,n}$ be the moduli space of stable curves with genus $g$ and $n$ marked points labeled by the set $[n]:=\{1,\ldots,n\}$. For integers $a_1,\ldots,a_n$, such that $\sum a_i=0$, the double ramification cycle $\DR_g(a_1,\ldots,a_n) \in H^*(\oM_{g,n},\mbC)$ is the Poincar\'e dual to the pushforward to $\oM_{g,n}$ of the virtual fundamental class of the moduli space of rubber stable maps from curves of genus $g$ with $n$ marked points to $\mbP^1$ relative to $0$ and $\infty$ with ramification profile given by $(a_1,\dots,a_n)$. Here ``rubber'' means that we consider maps up to the $\mbC^*$-action in the target $\mbP^1$ and a~positive (negative) coefficient~$a_i$ indicates a pole (zero) at the $i$-th marked point of order $a_i$ ($-a_i$), while $a_i=0$ just indicates that the $i$-th marked point is unconstrained. For future convenience we will also define the class $\DR_g(a_1,\ldots,a_n)$ to vanish in case $\sum a_i\neq0$.\\

Let us introduce the class $\Theta(a_1,\ldots,a_n)\in H^2(\oM_{g,n},\mbC)$ defined by
\begin{gather}\label{eq:Theta-class}
\Theta(a_1,\ldots,a_n):=\sum_{j=1}^n\frac{a_j^2\psi_j}{2}-\frac{1}{4}\sum_{h=0}^g\sum_{J\subset [n]}a_J^2\delta_h^J,
\end{gather}
if $\sum a_i=0$ and zero otherwise, where $\psi_i$, $1,\leq i\leq n$, is the first Chern class of the $i$-th tautological line bundle, and, for $J\subset [n]$ and $0\leq h\leq g$ in the stable range $2h-1+|J|>0$ and $2(g-h)-1+(n-|J|)>0$, $a_J:=\sum_{j\in J}a_j$ and $\delta^J_h \in H^2(\oM_{g,n},\mbC)$ is the class of the irreducible boundary divisor of $\oM_{g,n}$ formed by stable curves with a separating node at which two stable components meet, one of genus $h$ and marked points labeled by $|J|$ and the other of genus~$g-h$ and marked points labeled by the complement $J^c$ (naturally, $\delta^J_h=0$ if at least one of the stability conditions $2h-1+|J|>0$ and $2(g-h)-1+(n-|J|)>0$ is not satisfied).\\

By a result of Hain \cite{Hai13}, we know that
\begin{gather}\label{eq:Hain}
\left.\DR_g(a_1,\ldots,a_n)\right|_{\cM_{g,n}^\ct}=\frac{1}{g!}\Theta(a_1,\ldots,a_n)^g|_{\cM_{g,n}^\ct}.
\end{gather}
where $\cM_{g,n}^\ct\in \oM_{g,n}$ is the locus of stable curves with no non-separating nodes. Moreover, always by \cite{Hai13}, the class $\Theta(a_1,\ldots,a_n)|_{\cM_{g,n}^\ct}$ represents the pullback to $\cM_{g,n}^\ct$ of the theta divisor on the universal Jacobian over $\cM_{g,n}^\ct$, which implies the following relation in~$H^*(\cM_{g,n}^\ct,\mbC)$,
\begin{gather}\label{eq:Theta-relation}
\left.\Theta(a_1,\ldots,a_n)^{g+1}\right|_{\cM^\ct_{g,n}}=0.
\end{gather}

Let $\lambda_i\in H^{2i}(\oM_{g,n},\mbC)$, $1\leq i \leq g$, be the $i$-th Chern class of the Hodge bundle on $\oM_{g,n}$. We have \cite{FP00}
\begin{gather}\label{eq:lambdag-relation}
\lambda_g|_{\oM_{g,n}\setminus \cM_{g,n}^\ct} = 0.
\end{gather}
The classes $\lambda_i$, $\DR_g(a_1,\ldots,a_n)$ and $\Theta(a_1,\ldots,a_n)$ are algebraic, i.e. they belong in fact to the Chow ring $A^*(\oM_{g,n})$ . By the localization exact sequence, see e.g. \cite[Section~1.8]{Ful98}, for all~$k$, 
$$A_k(\oM_{g,n} \setminus \cM_{g,n}^\ct) \xrightarrow{i_*} A_k(\oM_{g,n}) \xrightarrow{j^*} A_k (\cM_{g,n}^\ct) \to 0,$$
where $i$ and $j$ are the inclusion maps of $\oM_{g,n} \setminus \cM_{g,n}^\ct$ and $\cM_{g,n}^\ct$ into $\oM_{g,n}$, and by equation (\ref{eq:lambdag-relation}), we deduce that, if $\alpha \in A_k(\oM_{g,n})$ is such that $\alpha|_{\cM_{g,n}^\ct}:= j^*\alpha=0$, then $\lambda_g \cdot \alpha = 0 \in A^*(\oM_{g,n})$. This allows to deduce from identities (\ref{eq:Hain}) and (\ref{eq:Theta-relation}) the following relations in $H^*(\oM_{g,n},\mbC)$:
\begin{align}
&\lambda_g \DR_g(a_1,\ldots,a_n)=\frac{1}{g!}\lambda_g \Theta(a_1,\ldots,a_n)^g, \label{eq:lambdag-Hain} \\
&\lambda_g \Theta(a_1,\ldots,a_n)^{g+1}=0. \label{eq:lambdag-Theta-relation}
\end{align}\\

For two pairs of nonnegative integers $(g_1,n_1+1)$ and $(g_2,n_2+1)$ in the stable range, let $\gl\colon\oM_{g_1,n_1+1}\times \oM_{g_2,n_2+1} \to \oM_{g,n}$ be the map that glues two stable curves of genus $g_1$ and $g_2$ with marked points labeled by the sets $I=\{i_1,\ldots,i_{n_1},{n+1}\}$ and $J=\{j_1,\ldots,j_{n_2},n+2\}$, respectively, with $I\sqcup J=[n+2]$, at their last marked points to form stable curve with a separating node with genus $g=g_1+g_2$, and marked points labeled by $[n]$. It is easy to see from the definitions that we have
\begin{gather}\label{eq:gluing-Theta}
\gl^*\Theta(a_1,\ldots,a_n) = \Theta\left(a_{i_1},\ldots,a_{i_{n_1}},-k\right) + \Theta\left(a_{j_1},\ldots,a_{j_{n_2}},k\right),
\end{gather}
where $k=\sum_{k=1}^{n_1}a_{i_k}=-\sum_{k=1}^{n_2}a_{j_k}$ and the classes on the right-hand side are pulled back from each of the two factors in the product $\oM_{g_1,n_1+1}\times \oM_{g_2,n_2+1} $. By~\cite{BSSZ15}, we also have
\begin{gather*}
\gl^*\DR_g(a_1,\ldots,a_n) = \DR_{g_1}\left(a_{i_1},\ldots,a_{i_{n_1}},-k\right) \DR_{g_2}\left(a_{j_1},\ldots,a_{j_{n_2}},k\right).
\end{gather*}
In the following we will denote by $\DR_{g_1}\left(a_{i_1},\ldots,a_{i_{n_1}},-k\right)\boxtimes \DR_{g_2}\left(a_{j_1},\ldots,a_{j_{n_2}},k\right)$ the pushforward $\gl_*\gl^*\DR_g(a_1,\ldots,a_n) \in H^*(\oM_{g,n},\mbC)$.\\

We have the following result on the intersection number of two double ramification cycles and the top Chern class of the Hodge bundle.
\begin{theorem}\label{eq:quadratic DR integral} Let $a_1,a_2,b_1,b_2 \in \mbZ$ and $g\in\mbZ_{\geq 0}$, then
\begin{equation}
\int_{\oM_{g,3}}\lambda_g\DR_g(a_1,a_2,-a_1-a_2)\DR_g(b_1,b_2,-b_1-b_2)=\frac{(a_1b_2-a_2b_1)^{2g}}{2^{3g}g!(2g+1)!!}.
\end{equation}
\end{theorem}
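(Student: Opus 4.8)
The plan is to reduce the statement to a pure intersection-theoretic identity for the $\Theta$-class via Hain's formula, then pin down the dependence on $(a_1,a_2,b_1,b_2)$ using invariance coming from the relation \eqref{eq:lambdag-Theta-relation}, and finally fix the overall constant. First, write $a=(a_1,a_2)$, $b=(b_1,b_2)$ and set $\Theta(a):=\Theta(a_1,a_2,-a_1-a_2)$, $\Theta(b):=\Theta(b_1,b_2,-b_1-b_2)$. Applying \eqref{eq:lambdag-Hain} to both factors turns the left-hand side into
\begin{equation*}
I_g(a,b):=\frac{1}{(g!)^2}\int_{\oM_{g,3}}\lambda_g\,\Theta(a)^g\,\Theta(b)^g .
\end{equation*}
Since $\Theta(a)$ is a quadratic form in $(a_1,a_2)$ valued in $H^2$, the quantity $I_g(a,b)$ is a polynomial, bihomogeneous of bidegree $(2g,2g)$ in $(a_1,a_2)$ and $(b_1,b_2)$, and symmetric under $a\leftrightarrow b$. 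Denote by $B(a,b):=\Theta(a+b)-\Theta(a)-\Theta(b)$ the associated polar (symmetric bilinear) $H^2$-valued form, so that $\Theta(a+tb)=\Theta(a)+t\,B(a,b)+t^2\Theta(b)$.

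The key input is a single consequence of \eqref{eq:lambdag-Theta-relation}. Applying that relation to the vector $a+tb$ and extracting the coefficient of $t^1$ in $\Theta(a+tb)^{g+1}=\Theta(a)^{g+1}+t\,(g+1)\Theta(a)^gB(a,b)+O(t^2)$ yields
\begin{equation*}
\lambda_g\,\Theta(a)^g\,B(a,b)=0\in H^*(\oM_{g,3},\mbC).
\end{equation*}
Now shear the second argument, $b\mapsto b+\tau a$. Expanding $\Theta(b+\tau a)^g=(\Theta(b)+\tau B(a,b)+\tau^2\Theta(a))^g$ and multiplying by $\lambda_g\Theta(a)^g$, every monomial containing a factor $\tau^2\Theta(a)$ is killed because $\lambda_g\Theta(a)^{g+1}=0$, and every monomial containing a factor $\tau B(a,b)$ is killed by the displayed vanishing; hence only the all-$\Theta(b)$ term survives, so $\lambda_g\Theta(a)^g\Theta(b+\tau a)^g=\lambda_g\Theta(a)^g\Theta(b)^g$. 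Thus $I_g(a,b+\tau a)=I_g(a,b)$ for all $\tau$, and the same argument (using $\lambda_g\Theta(b)^gB(a,b)=0$) gives $I_g(a+\sigma b,b)=I_g(a,b)$. Since the two shears $\left(\begin{smallmatrix}1&\tau\\0&1\end{smallmatrix}\right)$ and $\left(\begin{smallmatrix}1&0\\\sigma&1\end{smallmatrix}\right)$ generate $\SL(2,\mbC)$, the polynomial $I_g$ is invariant under the right $\SL(2,\mbC)$-action on the matrix $\left(\begin{smallmatrix}a_1&b_1\\a_2&b_2\end{smallmatrix}\right)$. By the first fundamental theorem of invariant theory for $\SL_2$, $I_g$ is a polynomial in $a_1b_2-a_2b_1$, and bihomogeneity of bidegree $(2g,2g)$ forces
\begin{equation*}
I_g(a,b)=C_g\,(a_1b_2-a_2b_1)^{2g}
\end{equation*}
for a universal constant $C_g$.

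It remains to compute $C_g$, and this is the step I expect to be the real obstacle. Since $I_g$ is proportional to $(a_1b_2-a_2b_1)^{2g}$, it suffices to evaluate at any point with $a_1b_2-a_2b_1=1$, e.g.\ $a=(1,0,-1)$, $b=(0,1,-1)$, reducing the task to the single number $\int_{\oM_{g,3}}\lambda_g\Theta(1,0,-1)^g\Theta(0,1,-1)^g$. The difficulty is that $\Theta$ still carries the boundary classes $\delta_h^J$, which \emph{do} contribute under $\lambda_g$ since they lie in the compact-type locus. I would compute this number by induction on $g$: restricting to the separating boundary divisors via the gluing map $\gl$ and using the splitting \eqref{eq:gluing-Theta} together with $\gl^*\DR_g=\DR_{g_1}\boxtimes\DR_{g_2}$ and $\gl^*\lambda_g=\lambda_{h}\boxtimes\lambda_{g-h}$ expresses the boundary contributions through lower-genus quadratic $\DR$ integrals, while the interior ($\psi$-only) contribution reduces to the Faber--Pandharipande $\lambda_g$-formula; solving the resulting recursion, with base case $I_0=1$, should yield $C_g=\tfrac{1}{2^{3g}g!(2g+1)!!}=\tfrac{1}{2^{2g}(2g+1)!}$. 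Alternatively, one may try to trade, modulo $\lambda_g\DR_g(a)$, the boundary terms of $\Theta(b)$ for $\psi$-terms using the relation $\lambda_g\Theta(a)^gB(a,b)=0$ and then appeal to the known closed formulas for $\lambda_g$--$\DR$--$\psi$ intersection numbers. In either approach the bookkeeping of the boundary contributions is the main technical hurdle, whereas the shape $C_g\,(a_1b_2-a_2b_1)^{2g}$ is essentially forced by the invariance argument above.
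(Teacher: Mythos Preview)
Your $\SL(2)$-invariance argument is correct and is a genuinely different route to the \emph{shape} of the answer than the one the paper takes. Extracting the $t^1$-coefficient of $\lambda_g\Theta(a+tb)^{g+1}=0$ indeed yields $\lambda_g\Theta(a)^gB(a,b)=0$, and together with $\lambda_g\Theta(a)^{g+1}=0$ this kills every nonconstant-in-$\tau$ term in $\lambda_g\Theta(a)^g\Theta(b+\tau a)^g$; the two shears generate $\SL(2,\mbC)$ on the columns, and the first fundamental theorem plus bihomogeneity force $I_g=C_g(a_1b_2-a_2b_1)^{2g}$. This part is clean and not in the paper.

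The gap is precisely where you place it: $C_g$ is not computed. Your primary sketch (specialize, fully expand both $\Theta$-powers into $\psi$'s and $\delta$'s, use the Faber--Pandharipande $\lambda_g$-formula for the pure-$\psi$ piece, gluing for the rest) faces a proliferation of cross terms and is not an obvious route to a closed recursion.

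The paper's proof bypasses the invariance step altogether and obtains shape and constant simultaneously from a direct recursion. The organizing idea your sketches are circling but not landing on is: apply~\eqref{eq:lambdag-Hain} to only \emph{one} factor, work with $f_g(\oa,\ob)=\int_{\oM_{g,3}}\lambda_g\Theta(\oa)^g\DR_g(\ob)$, and peel off a \emph{single} $\Theta(\oa)$ via~\eqref{eq:Theta-class}. Keeping the second factor as an honest $\DR_g(\ob)$ is exactly what makes the $\psi$-contribution tractable, because one can then apply the BSSZ formula for $\psi_i\DR_g(\ob)$; combined with~\eqref{eq:Theta-relation} and a dimension count, only $\DR_1\boxtimes\DR_{g-1}$ strata survive, each contributing a multiple of $\tfrac{b_\bullet^2}{24}\,f_{g-1}(\oa,\ob)$. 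The $\delta$-part factors the same way. An elementary simplification then collapses everything to
\[
f_g(\oa,\ob)=\frac{(a_1b_2-a_2b_1)^2}{8(2g+1)}\,f_{g-1}(\oa,\ob),
\]
which immediately gives the constant. Your ``alternative'' (trade boundary terms for $\psi$'s and use $\lambda_g$--$\DR$--$\psi$ formulas) is closer in spirit to this, but still one move away: the cleanest step is simply not to convert the second $\DR_g$ into $\Theta^g$ in the first place.
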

\begin{proof}
By equation (\ref{eq:lambdag-Hain}), Theorem \ref{eq:quadratic DR integral} is equivalent to the formula
\begin{gather}\label{eq:DRTheta integral}
\int_{\oM_{g,3}}\lambda_g\Theta(a_1,a_2,-a_1-a_2)^g\DR_g(b_1,b_2,-b_1-b_2)=\frac{(a_1b_2-a_2b_1)^{2g}}{2^{3g}(2g+1)!!},\quad g\ge 0.
\end{gather}
Denote the left-hand side of equation~\eqref{eq:DRTheta integral} by $f_g(\oa,\ob)$, where $\oa=(a_1,a_2,a_3)$, $\ob=(b_1,b_2,b_3)$ and $a_3=-a_1-a_2$, $b_3=-b_1-b_2$. Clearly, $f_0(\oa,\ob)=1$, so suppose that $g\ge 1$. In order to compute the integral, let us express one of the classes $\Theta(\oa)$ using formula~\eqref{eq:Theta-class}. Using relation~\eqref{eq:Theta-relation} and the formula for the intersection of a psi class with a double ramification cycle~\cite{BSSZ15}, for any $1\le i\le 3$ we compute
\begin{align*}
\int_{\oM_{g,3}}\lambda_g\psi_i\Theta(\oa)^{g-1}\DR_g(\ob)=&\frac{2g-1}{2g+1}\int_{\oM_{g,3}}\lambda_g\Theta(\oa)^{g-1}\DR_1(b_i,-b_i)\boxtimes\DR_{g-1}(b_j,b_k,b_i)+\\
&-\frac{2b_j}{(2g+1)b_i}\int_{\oM_{g,3}}\lambda_g\Theta(\oa)^{g-1}\DR_{g-1}(b_i,b_k,b_j)\boxtimes\DR_1(b_j,-b_j)+\\
&-\frac{2b_k}{(2g+1)b_i}\int_{\oM_{g,3}}\lambda_g\Theta(\oa)^{g-1}\DR_{g-1}(b_i,b_j,b_k)\boxtimes\DR_1(b_k,-b_k)=\\
=&\left[\frac{2g-1}{2g+1}\frac{b_i^2}{24}+\frac{2b_j}{(2g+1)(b_j+b_k)}\frac{b_j^2}{24}+\frac{2b_k}{(2g+1)(b_j+b_k)}\frac{b_k^2}{24}\right]f_{g-1}(\oa,\ob)=\\
=&\frac{(2g+1)b_i^2-6b_jb_k}{24(2g+1)}f_{g-1}(\oa,\ob),
\end{align*}
where $\{i,j,k\}=\{1,2,3\}$. As a result,
\begin{gather*}
\int_{\oM_{g,3}}\lambda_g\left(\sum_{i=1}^3a_i^2\psi_i\right)\Theta(\oa)^{g-1}\DR_g(\ob)=f_{g-1}(\oa,\ob)\sum_{\substack{i;\,j<k\\\{i,j,k\}=\{1,2,3\}}}a_i^2\frac{(2g+1)b_i^2-6b_jb_k}{24(2g+1)}.
\end{gather*}
Next we compute
\begin{align*}
&\int_{\oM_{g,3}}\lambda_g\left(\frac{1}{2}\sum_{h=0}^g\sum_{J\subset [3]}a_J^2\delta_h^J\right)\Theta(\oa)^{g-1}\DR_g(\ob)=\\
&\hspace{2cm}=\sum_{\substack{i;\,j<k\\\{i,j,k\}=\{1,2,3\}}}a_i^2\int_{\oM_{g,3}}\lambda_g\Theta(\oa)^{g-1}\DR_1(b_i,-b_i)\boxtimes\DR_{g-1}(b_j,b_k,b_i)=\\
&\hspace{2cm}=f_{g-1}(\oa,\ob)\sum_{i=1}^3\frac{a_i^2b_i^2}{24}.
\end{align*}
Summarizing the above computations we get
\begin{align*}
f_g(\oa,\ob)=&\frac{1}{2}\int_{\oM_{g,3}}\lambda_g\left(\sum_{i=1}^3a_i^2\psi_i-\frac{1}{2}\sum_{h=0}^g\sum_{J\subset [3]}a_J^2\delta_h^J\right)\Theta(\oa)^{g-1}\DR_g(\ob)=\\
=&\frac{1}{2}f_{g-1}(\oa,\ob)\left(\sum_{\substack{i;\,j<k\\\{i,j,k\}=\{1,2,3\}}}a_i^2\frac{(2g+1)b_i^2-6b_jb_k}{24(2g+1)}-\sum_{i=1}^3\frac{a_i^2b_i^2}{24}\right)=\\
=&-\frac{f_{g-1}(\oa,\ob)}{8(2g+1)}\sum_{\substack{i;\,j<k\\\{i,j,k\}=\{1,2,3\}}}a_i^2 b_jb_k=\\
=&\frac{(a_1b_2-a_2b_1)^2}{8(2g+1)}f_{g-1}(\oa,\ob),
\end{align*}
that proves formula~\eqref{eq:DRTheta integral} and completes the proof of the theorem.
\end{proof}


\section{An infinite rank partial CohFT and its DR hierarchy}\label{section:DR}

Recall the following definition, which is a generalization first considered in \cite{LRZ15} of the notion of cohomological field theory (CohFT) from \cite{KM94}.

\begin{definition}
For a pair of nonnegative integers $(g,n)$ in the stable range $2g-2+n>0$, a partial CohFT is a system of linear maps $c_{g,n}\colon V^{\otimes n} \to H^\even(\oM_{g,n},\mbC)$, where $V$ is an arbitrary finite dimensional $\mbC$-vector space, called the phase space, together with a special element $e_1\in V$, called the unit, and a symmetric nondegenerate bilinear form $\eta\in (V^*)^{\otimes 2}$, called the metric, such that, chosen any basis $e_1,\ldots,e_{\dim V}$ of V, the following axioms are satisfied:
\begin{itemize}
\item[(i)] the maps $c_{g,n}$ are equivariant with respect to the $S_n$-action permuting the $n$ copies of~$V$ in $V^{\otimes n}$ and the $n$ marked points in $\oM_{g,n}$, respectively.
\item[(ii)] $\pi^* c_{g,n}( \otimes_{i=1}^n e_{\alpha_i}) = c_{g,n+1}(\otimes_{i=1}^n  e_{\alpha_i}\otimes e_1)$ for $1 \leq\alpha_1,\ldots,\alpha_n\leq \dim V$, where $\pi\colon\oM_{g,n+1}\to\oM_{g,n}$ is the map that forgets the last marked point.\\
Moreover $c_{0,3}(e_{\alpha}\otimes e_\beta \otimes e_1) =\eta(e_\alpha\otimes e_\beta) =:\eta_{\alpha\beta}$ for $1\leq \alpha,\beta\leq \dim V$.
\item[(iii)] $\gl^* c_{g_1+g_2,n_1+n_2}( \otimes_{i=1}^n e_{\alpha_i}) = c_{g_1,n_1+1}(\otimes_{i\in I} e_{\alpha_i} \otimes e_\mu)\eta^{\mu \nu} c_{g_2,n_2+1}( \otimes_{j\in J} e_{\alpha_j}\otimes e_\nu)$ for $2g_1-1+n_1>0$, $2g_2-1+n_2>0$ and $1 \leq\alpha_1,\ldots,\alpha_n\leq \dim V$, where $I \sqcup J = \{1,\ldots,n\}$, $|I|=n_1$, $|J|=n_2$, and $\gl\colon\oM_{g_1,n_1+1}\times\oM_{g_2,n_2+1}\to \oM_{g_1+g_2,n_1+n_2}$ is the corresponding gluing map and where $\eta^{\alpha\beta}$ is defined by $\eta^{\alpha \mu}\eta_{\mu \beta} = \delta^\alpha_\beta$ for $1\leq \alpha,\beta\leq \dim V$.\\
\end{itemize}
\end{definition}

Remark that a notion of infinite rank partial CohFT, i.e. a partial CohFT with an infinite dimensional phase space $V$, requires some care. One needs to clarify what is meant by the matrix~$(\eta^{\alpha\beta})$ and to make sense of the, a priori infinite, sum over $\mu$ and $\nu$, both appearing in Axiom (iii). One possibility is demanding that the image of the linear map $V^{\otimes (n-1)}\to H^*(\oM_{g,n},\mbC) \otimes V^*$ induced by $c_{g,n}\colon V^{\otimes n}\to H^*(\oM_{g,n},\mbC)$ is contained in $H^*(\oM_{g,n},\mbC) \otimes \eta^\sharp(V)$, where $\eta^\sharp\colon V\to V^*$ is the injective map induced by the bilinear form $\eta$. Then in Axiom~(iii), instead of using an undefined bilinear form $(\eta^{\alpha\beta})$ on $V^*$, one can use the bilinear form on $\eta^\sharp(V)$ induced by $\eta$. This solves the problem with convergence.\\

A useful special case is the following. Let the basis $\{e_\alpha\}_{\alpha \in I}$ of $V$ be countable and, for any~$(g,n)$ in the stable range and each $e_{\alpha_1},\ldots,e_{\alpha_{n-1}} \in V$, let the set $\{\beta \in I\, |\, c_{g,n}(\otimes_{i=1}^{n-1} e_{\alpha_i}\otimes e_\beta)\neq 0\}$ be finite.  In particular this implies that the matrix $\eta_{\alpha\beta}$ is row- and column-finite (each row and each column have a finite number of nonzero entries), which is equivalent to $\eta^\sharp(V)\subseteq \mathrm{span}(\{e^\alpha\}_{\alpha \in I})$, where $\{e^\alpha\}_{\alpha \in I}$ is the dual ``basis''. Let us further demand that the injective map $\eta^\sharp\colon V \to \mathrm{span}(\{e^\alpha\}_{\alpha \in I})$ is surjective too, i.e. that a unique two-sided  row- and column-finite matrix $\eta^{\alpha\beta}$, inverse to $\eta_{\alpha\beta}$, exists (it represents the inverse map $(\eta^\sharp)^{-1}\colon\mathrm{span}(\{e^\alpha\}_{\alpha \in I})\to V$). Then the equation appearing in Axiom (iii) is well defined with the double sum only having a finite number of nonzero terms for each boundary divisor.\\

We will now construct an example of such infinite rank partial CohFT.
\begin{proposition}\label{prop:CohFT}
Let $\mu$ be a formal parameter. The classes $c_{g,n}(\otimes_{i=1}^n e_{a_i}) := \exp(\mu^2\Theta(a_1,\ldots,a_n))$ form an infinite rank partial cohomological field theory with a phase space $V=\mathrm{span}(\{e_a\}_{a \in \mbZ})$, where the unit is~$e_0$ and the metric, written in the basis $\{e_a\}_{a \in \mbZ}$, is $\eta_{a b}=\delta_{a+b,0}$.
\end{proposition}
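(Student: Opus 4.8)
The plan is to verify the three partial CohFT axioms directly for the classes $c_{g,n}(\otimes_{i=1}^n e_{a_i}) = \exp(\mu^2\Theta(a_1,\ldots,a_n))$, since each axiom reduces to a corresponding structural property of the $\Theta$-class that is either elementary or already recorded in the excerpt. First I would observe that $\exp(\mu^2\Theta(\oa))$ lands in $H^{\even}$, that the whole construction is $S_n$-equivariant because formula \eqref{eq:Theta-class} is manifestly symmetric under simultaneous permutation of the indices $a_i$ and the marked points, and that the proposed bilinear form $\eta_{ab}=\delta_{a+b,0}$ is symmetric, nondegenerate, and both row- and column-finite with its own inverse $\eta^{ab}=\delta_{a+b,0}$ being row- and column-finite; I would also confirm the finiteness hypothesis of the special infinite-rank framework, namely that for fixed $a_1,\ldots,a_{n-1}$ the set of $b$ with $\exp(\mu^2\Theta(a_1,\ldots,a_{n-1},b))\neq 0$ is finite. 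Here the key point is that $\Theta$, hence $\exp(\mu^2\Theta)$, vanishes unless $\sum a_i=0$, so this set is the single value $b=-\sum_{i=1}^{n-1}a_i$, which is finite, and this same vanishing convention makes the whole setup consistent with the conventions fixed in Section~\ref{section:quadratic}.

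Next I would check the unit/forgetful axiom (ii). Since $e_0$ is the proposed unit, I must show $\pi^*\exp(\mu^2\Theta(a_1,\ldots,a_n)) = \exp(\mu^2\Theta(a_1,\ldots,a_n,0))$; by functoriality of the exponential this follows once I establish $\pi^*\Theta(a_1,\ldots,a_n) = \Theta(a_1,\ldots,a_n,0)$. This is a standard pullback computation along the forgetful map $\pi\colon\oM_{g,n+1}\to\oM_{g,n}$ using the well-known comparison formulas $\pi^*\psi_j = \psi_j - \delta_{0}^{\{j,n+1\}}$ and the pullback behaviour of the boundary divisors $\delta_h^J$, combined with the fact that adjoining a marked point with $a_{n+1}=0$ leaves every $a_J$ unchanged (since $a_{n+1}=0$). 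I would also verify the normalization $c_{0,3}(e_a\otimes e_b\otimes e_0)=\eta_{ab}$: on $\oM_{0,3}$ (a point) every $\psi$ and boundary class vanishes, so $\Theta(a,b,0)=0$ and $\exp(\mu^2\Theta(a,b,0))=1$ precisely when $a+b=0$ forces the class to be defined, matching $\delta_{a+b,0}$.

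The heart of the proof, and the step I expect to be the main obstacle, is the gluing/splitting axiom (iii). What must be shown is that pulling back $\exp(\mu^2\Theta(\oa))$ along the separating-node gluing map $\gl$ factorizes, against the metric, into a product of two such exponentials on the factors. The crucial algebraic input is the additive splitting of $\Theta$ already stated as \eqref{eq:gluing-Theta}, namely $\gl^*\Theta(a_1,\ldots,a_n) = \Theta(a_{i_1},\ldots,a_{i_{n_1}},-k) + \Theta(a_{j_1},\ldots,a_{j_{n_2}},k)$ with $k=\sum_{\ell=1}^{n_1}a_{i_\ell}$; exponentiating turns this sum into the product $\exp(\mu^2\Theta(\ldots,-k))\exp(\mu^2\Theta(\ldots,k))$. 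The delicate part is matching this against the contraction $c_{g_1,n_1+1}(\cdots\otimes e_\mu)\,\eta^{\mu\nu}\,c_{g_2,n_2+1}(\cdots\otimes e_\nu)$: one must see that inserting $\eta^{\mu\nu}=\delta_{\mu+\nu,0}$ collapses the double sum to the single pair $(\mu,\nu)=(-k,k)$, which is exactly the index value forced on each factor by the constraint $\sum a_i=0$ that makes each $\Theta$ nonzero. I would argue that the vanishing convention on $\Theta$ automatically enforces $\mu=-k$ on the first factor and $\nu=k=-\mu$ on the second, so the only surviving term in the finite double sum is precisely the one produced by \eqref{eq:gluing-Theta}, completing the verification. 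The subtlety to handle carefully is simply that the label $k$ in \eqref{eq:gluing-Theta} coincides with the summation index selected by the metric, so the bookkeeping of signs and of the two insertion points must be tracked to confirm the contraction reproduces exactly $\gl^*\exp(\mu^2\Theta(\oa))$ and nothing more.
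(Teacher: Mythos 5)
Your proposal is correct and follows essentially the same route as the paper: the paper likewise dismisses axioms (i) and (ii) as immediate from the definition of $\Theta$, and for axiom (iii) uses exactly your observation that the vanishing of $\Theta$ unless $\sum a_i=0$ forces the metric contraction to collapse to the single term $(\mu,\nu)=(-k,k)$, which then matches the additive splitting \eqref{eq:gluing-Theta} after exponentiation. Your verification of the forgetful-map compatibility via $\pi^*\psi_j=\psi_j-\delta_0^{\{j,n+1\}}$ is more detailed than the paper's one-line assertion, but it is the same argument.
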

\begin{proof}
Axioms (i) and (ii) follow directly from the definition of the classes $\Theta(a_1,\ldots,a_n)$. For Axiom (iii) notice that, for fixed $e_{a_1},\ldots,e_{a_{n-1}}$, we have $c_{g,n}(\otimes_{i=1}^{n-1} e_{a_i} \otimes e_b) = 0$ unless $b= -\sum_{i=1}^{n-1} a_i$. Moreover we have $\eta^{a b}= \delta_{a+b,0}$ and equation (\ref{eq:gluing-Theta}) implies Axiom (iii) where the double sum consists of just one term.
\end{proof}

In \cite{Bur15} it was shown how to associate an integrable system of evolutionary Hamiltonian PDEs, called the double ramification (DR) hierarchy, to any CohFT and in \cite{BDGR18} it was remarked how a partial CohFT is sufficient for the construction to work. Let us see how such construction generalizes to the infinite rank partial CohFT introduced in Proposition \ref{prop:CohFT}. Recall from \cite{Bur15,BR16a} that the DR Hamiltonian densities are the generating series
\begin{align}\label{eq:DR Hamiltonians}
g_{a,d}:=\sum_{\substack{g\ge 0,\,n\ge 1\\2g-1+n>0}}\frac{(-\eps^2)^g}{n!}\sum_{\substack{b,b_1,\ldots,b_n\in\mbZ\\ a_1,\ldots,a_n\in\mbZ}}\left(\int_{\DR_g\left(b,b_1,\ldots,b_n\right)}\lambda_g\psi_1^d c_{g,n+1}\left(e_a\otimes \otimes_{j=1}^n e_{a_j}\right)\right)\prod_{j=1}^n p_{b_j}^{a_j} e^{-ibx},
\end{align}
for $a\in \mbZ$ and $d\in \mbZ_{\geq 0}$, seen as formal power series in the formal variables $\eps,\mu,e^{ix},p^a_b$, $a,b \in \mbZ$.\\

Thanks to the fact that the intersection numbers appearing in equation (\ref{eq:DR Hamiltonians}) vanish unless $\sum b_j=-b$ and that, by formula (\ref{eq:lambdag-Hain}), the class $\lambda_g \DR_g(b,b_1,\ldots,b_n)$ is a polynomial in $b_1,\ldots,b_n$ homogeneous of degree $2g$, the above generating functions can be expressed uniquely (see e.g.~\cite{BR16a}) as a degree $0$ differential polynomial, i.e. a formal power series in $\eps,\mu$ and the new formal variables $u^a_k$, $a\in \mbZ$, $k\in \mbZ_{\geq 0}$, of degree $0$ with respect to the grading $\deg u^a_k=k$, $\deg\eps = -1$. The relation between the new variables $u^*_*$ and the old ones $p^*_*,e^{ix}$ is given by the formula $u^a_k = \d_x^k \left(\sum_{b\in \mbZ}p^a_b e^{ibx}\right)$. The expression of the operator $\d_x$ in the new variables $u^*_*$ is given by
$$
\d_x=\sum_{\substack{a\in \mbZ\\ j\geq 0}} u^a_{j+1} \frac{\d}{\d u^a_j}.
$$
Specifically, thanks to the fact that the intersection numbers appearing in equation (\ref{eq:DR Hamiltonians}) vanish unless $\sum a_j=-a$, we obtain $g_{a,d}\in \mbC[u^{<0}_*][[u^{\geq 0}_*,\eps,\mu]]^{[0]}$, where we put the superscript $[0]$ to denote the space of differential polynomials of degree $0$.\\

The DR Hamiltonians are defined as the local functionals $\og_{a,d}:=\int g_{a,d} dx$, which denote the equivalence classes of $g_{a,d}$ in the quotient vector space $\left(\mbC[u^{<0}_*][[u^{\geq 0}_*,\eps,\mu]]/(\Im(\d_x)\oplus \mbC)\right)^{[0]}$. Notice that, with respect to the formal variables $p^*_*,e^{ix}$, the symbol $\int g_{a,d} dx$ represents the coefficient of $e^{i 0 x}$ in the formal power series $g_{a,d}$.\\

A result of \cite{Bur15} says that the DR Hamiltonians mutually commute,
\begin{gather}\label{eq:DRcommute}
\{\og_{a_1,d_1},\og_{a_2,d_2}\} = 0, \qquad a_1,a_2\in \mbZ,\quad d_1,d_2 \in \mbZ_{\geq 0},
\end{gather}
with respect to the Poisson brackets of two local functionals $\of_1 = \int f_1 dx, \of_2 = \int f_2 dx$, with $f_1,f_2 \in \mbC[u^{<0}_*][[u^{\geq 0}_*,\eps,\mu]]^{[0]}$, given by
\begin{gather}
\{\of_1,\of_2\} = \int \left(\sum_{a_1,a_2\in \mbZ}\frac{\delta \of_1}{\delta u^{a_1}} \delta_{a_1+a_2,0} \ \d_x\left(\frac{\delta \of_2}{\delta u^{a_2}}\right)\right) dx,
\end{gather}
where $\frac{\delta \of}{\delta u^a} = \sum_{k\geq 0}(-\d_x)^k \frac{\d f}{\d u^a_k}$ for $\of = \int f dx$ and $f\in  \mbC[u^{<0}_*][[u^{\geq 0}_*,\eps,\mu]]^{[0]}$.\\

Now we make the following observation, specific for the infinite rank partial CohFT we are dealing with. For fixed $d\in\mbZ_{\geq0}$, let us collect the DR Hamiltonian densities $g_{a,d}$, for all $a\in\mbZ$, into a single generating function $g_d:=\sum_{a\in \mbZ} g_{a,d} e^{-iay}$ by use of the extra formal variable $e^{iy}$. Because the classes $c_{g,n+1}\left(e_a\otimes \otimes_{j=1}^n e_{a_j}\right)$ are polynomials in $a_1,\ldots,a_n$ of top degree $2g$, where in particular the coefficient of $\mu^{2j}$ is a polynomial of degree $2j$, and $\sum a_j=-a$, we can consider the formal change of variables
$$
u_{k_1,k_2} = \d_y^{k_2}\left( \sum_{a\in \mbZ} u^a_{k_1} e^{i a y} \right)= \d_x^{k_1} \d_y^{k_2} \left(\sum_{a,b\in \mbZ} p^a_b e^{iay+ibx}\right)
$$
and express $g_d$ uniquely as a differential polynomial in these new variables, specifically $g_d \in \mbC[[u_{*,*},\eps,\mu]]^{[(0,0)]}$, where $\deg u_{k_1,k_2} = (k_1,k_2)$, $\deg \eps = (-1,0)$, $\deg \mu = (0,-1)$. Naturally, we have
\begin{align*}
\d_x=&\sum_{k_1,k_2\geq 0} u_{k_1+1,k_2} \frac{\d}{\d u_{k_1,k_2}},\\
\d_y=&\sum_{k_1,k_2\geq 0} u_{k_1,k_2+1} \frac{\d}{\d u_{k_1,k_2}}.
\end{align*}
We will denote $u_{0,0}$ simply by $u$.\\

The DR Hamiltonian densities $g_{a,d}$ can be recovered from $g_d$ by the formula $g_{a,d}=\int \left(g_d e^{iay}\right)dy$, which extracts the coefficient of $e^{-iay}$ from $g_d$. Hence $\og_{a,d} = \iint\left(g_d e^{iay}\right)dxdy$. This suggest to restrict our attention to the Hamiltonians $\og_{0,d}$, whose densities depend on $e^{iy}$ through $u_{*,*}$ only. These are the simplest and most commonly considered kind of local functionals.\\

Let $\og_d = \og_{0,d} = \iint g_d\ dx dy$ be the equivalence class of $g_d$ in the quotient vector space $\left(\mbC[[u_{*,*},\eps,\mu]]/(\Im(\d_x)\oplus\Im(\d_y)\oplus\mbC)\right)^{[(0,0)]}$. Then, from equation (\ref{eq:DRcommute}), we deduce
\begin{gather}
\{\og_{d_1},\og_{d_2}\} = 0, \qquad d_1,d_2 \in \mbZ_{\geq 0},
\end{gather}
where the Poisson bracket of two local functionals $\of_1 = \iint f_1\ dxdy, \of_2 = \iint f_2\ dxdy$, with $f_1,f_2 \in \mbC[[u_{*,*},\eps,\mu]]^{[(0,0)]}$ is given by
\begin{gather}
\{\of_1,\of_2\} = \iint \left(\frac{\delta \of_1}{\delta u}  \d_x\left(\frac{\delta \of_2}{\delta u}\right)\right) dx dy,
\end{gather}
where $\frac{\delta \of}{\delta u} = \sum_{k_1,k_2\geq 0}(-\d_x)^{k_1}(-\d_y)^{k_2} \frac{\d f}{\d u_{k_1,k_2}}$ for $\of = \iint f\ dxdy$ and $f\in \mbC[[u_{*,*},\eps,\mu]]^{[(0,0)]}$.\\

The evolutionary PDEs generated via the above Poisson structure by the DR Hamiltonians~$\og_d$ are all compatible and have the form
\begin{gather}\label{eq:DRhierarchy}
\frac{\d u}{\d t_d} = \d_x \frac{\delta \og_d}{\delta u},\qquad d\in \mbZ_{\geq 0}.
\end{gather}


\section{The noncommutative KdV hierarchy and the main theorem}\label{section:ncKdV}

The classical construction of the KdV hierarchy as the system of Lax equations 
$$
\frac{\d L}{\d t_n}=\frac{\eps^{2n}}{(2n+1)!!}\left[\left(L^{n+1/2}\right)_+,L\right],\quad n\ge 1,
$$
where $L=\d_x^2+2\eps^{-2}u$, admits generalizations, called noncommutative KdV hierarchies, where one doesn't have the pairwise commutativity of the $x$-derivatives of the dependent variable. In what follows we will work with a specific example from the class of noncommutative KdV hierarchies.\\ 

The graded algebra of differential polynomials in two space dimensions introduced in Section~\ref{section:DR}, $\mbC[[u_{*,*},\eps,\mu]]$, where $\deg u_{k_1,k_2} = (k_1,k_2)$, $\deg \eps = (-1,0)$, $\deg \mu = (0,-1)$, can be endowed with the following graded associative Moyal star-product. Let $f,g \in \mbC[[u_{*,*},\eps,\mu]]$, with $\deg f = (i_1,i_2)$, $\deg g = (j_1,j_2)$, then define
\begin{gather}\label{eq:Moyal}
f* g:=f\ \exp\left(\frac{i \eps\mu}{2}(\overleftarrow{\d_x} \overrightarrow{\d_y}-\overleftarrow{\d_y} \overrightarrow{\d_x})\right)\ g =\sum_{n\ge 0}\sum_{k_1+k_2=n}\frac{(-1)^{k_2}(i\eps\mu)^n}{2^n k_1!k_2!}(\d_x^{k_1} \d_y^{k_2} f) (\d_x^{k_2}\d_y^{k_1} g),
\end{gather}
with $\deg{(f * g)} = (i_1+j_1,i_2+j_2)$.\\

Let us consider the algebra of pseudo-differential operators of the form
$$
\sum_{i\le n}a_i*\d_x^i,\quad n\in\mbZ,\quad a_i\in\mbC[[u_{*,*},\mu]][[\eps,\eps^{-1}].
$$
Consider the operator $L:=\d_x^2+2\eps^{-2}u$. The noncommutative KdV hierarchy with respect to the Moyal star-product~\eqref{eq:Moyal} is is defined by (see e.g.~\cite{Ham05,DM00})
\begin{gather}\label{eq:ncKdV hierarchy}
\frac{\d L}{\d t_n}=\frac{\eps^{2n}}{(2n+1)!!}\left[\left(L^{n+1/2}\right)_+,L\right]_*,\quad n\ge 1,
\end{gather}
where we put the subscript $*$ in the notation for the commutator in order to emphasize that it is taken with respect to the noncommutative product $*$. The first equation of the hierarchy is
$$
\frac{\d u}{\d t_1}=\frac{1}{2}\d_x(u*u)+\frac{\eps^2}{12}u_{xxx}.
$$

\begin{theorem}\label{theorem:main}
The flows $\frac{\d}{\d t_d}$, $d\ge 1$, of the DR hierarchy (\ref{eq:DRhierarchy}) are given by the noncommutative KdV hierarchy~\eqref{eq:ncKdV hierarchy}.
\end{theorem}

\begin{proof}

We prove the theorem in two steps.\\

{\it Step 1}. Let us prove that the flow $\frac{\d}{\d t_1}$ of the DR hierarchy (\ref{eq:DRhierarchy}) is given by
\begin{gather}\label{eq:first flow of DR hierarchy}
\frac{\d u}{\d t_1}=\frac{1}{2}\d_x(u*u)+\frac{\eps^2}{12}u_{xxx}.
\end{gather}
For this we have to compute the integrals
\begin{multline}\label{Theta-integrals}
\int_{\oM_{g,n+1}}\lambda_g\psi_1\Theta(0,a_1,\ldots,a_n)^k\DR_g(0,b_1,\ldots,b_n)=\\
=(2g-2+n)\int_{\oM_{g,n}}\lambda_g\Theta(a_1,\ldots,a_n)^k\DR_g(b_1,\ldots,b_n).
\end{multline}
Relation (\ref{eq:lambdag-Theta-relation}) implies that integral~\eqref{Theta-integrals} is nonzero only if $n=3$ and $k=g$ or if $n=2$, $k=0$ and $g=1$. In the second case integral~\eqref{Theta-integrals} is equal to $\frac{b_1^2}{12}$, which gives the second term on the right-hand side of equation~\eqref{eq:first flow of DR hierarchy}.\\

Regarding integral~\eqref{Theta-integrals} for $n=3$ and $k=g$, by Theorem~\ref{eq:quadratic DR integral}, we have
\begin{align*}
&\sum_{g\ge 0}\sum_{ a_1, a_2, b_1, b_2\in\mbZ}\frac{(-\eps^2\mu^2)^g}{g!}\int_{\oM_{g,4}}\lambda_g\psi_1\Theta(0,-a_1-a_2,a_1,a_2)^g\DR_g(0,-b_1-b_2,b_1,b_2)p^{ a_1}_{ b_1}p^{ a_2}_{ b_2}=\\
&\hspace{2cm}=\sum_{g\ge 0}\sum_{a_1,a_2,b_1,b_2\in\mbZ}(-\eps^2\mu^2)^g\frac{(a_2 b_1-a_1 b_2)^{2g}}{2^{2g}(2g)!}p^{ a_1}_{ b_1}p^{ a_2}_{b_2}=\\
&\hspace{2cm}=\sum_{g\ge 0}\sum_{k_1+k_2=2g}\sum_{ a_1, a_2, b_1, b_2\in\mbZ}\frac{(-1)^{k_2} (-\eps^2\mu^2)^g}{2^{2g} k_1!k_2!}( a_2 b_1)^{k_1}( a_1 b_2)^{k_2} p^{ a_1}_{ b_1}p^{ a_2}_{ b_2}=\\
&\hspace{2cm}=\sum_{g\ge 0}\sum_{k_1+k_2=2g}\frac{(-1)^{k_2}(-\eps^2\mu^2)^g}{2^{2g} k_1!k_2!}u_{k_1,k_2}u_{k_2,k_1}=\\
&\hspace{2cm}=u*u.
\end{align*}
Equation~\eqref{eq:first flow of DR hierarchy} is then proved.\\

{\it Step 2}. Let us now check that all other flows $\frac{\d}{\d t_d}$ of the DR hierarchy, for $d\geq 2$, are described by the noncommutative KdV hierarchy.\\ 

Let $f \in \mbC[[u_{*,*},\eps,\mu]]$ and let $\deg f =:(\deg_x f,\deg_y f)$ so that, in particular,
$$
\deg_x u_{a,b}=a,\quad\deg_y u_{a,b}=b,\quad \deg_x\eps=\deg_y\mu=-1.
$$
We see that both the flows of the DR hierarchy (\ref{eq:DRhierarchy}) and the flows of the noncommutative KdV hierarchy (\ref{eq:ncKdV hierarchy}) have the form
\begin{gather}\label{eq:general hierarchy}
\frac{\d u}{\d t_d}=P_d(u_{*,*},\mu,\eps)=\sum_{i\ge 0}P_{d,i}(u_{*,*},\mu)\eps^i,\quad d\ge 1,
\end{gather}
where $P_{d,i}(u_{*,*},\mu)$ are polynomials in the variables $u_{*,*}$ and $\mu$ satisfying 
\begin{align}
&P_1=\frac{1}{2}\d_x(u*u)+\frac{\eps^2}{12}u_{xxx},\label{eq:properties of hierarchies1}\\
&P_{d,0}=\d_x\left(\frac{u^{d+1}}{(d+1)!}\right),\label{eq:properties of hierarchies2}\\
&\deg_x P_{d,i}=i+1,\qquad\deg_y P_{d,i}=0\label{eq:properties of hierarchies3}.
\end{align}
It remains to check that a hierarchy of commuting flows of the form~\eqref{eq:general hierarchy}, satisfying properties~\eqref{eq:properties of hierarchies1}--\eqref{eq:properties of hierarchies3}, is unique. This is guaranteed by the following lemma.
\begin{lemma}
Suppose that $P(u_{*,*})$ is a polynomial in the variables $u_{*,*}$ of degrees $\deg_x P=d\ge 2$, $\deg_y P=q\ge 0$, and such that the flows 
\begin{align*}
\frac{\d u}{\d t}=&uu_x,\\
\frac{\d u}{\d\tau}=&P(u_{*,*}),
\end{align*}
commute. Then $P=0$.
\end{lemma}
\begin{proof}
Without loss of generality we can assume that $P$ is homogeneous with respect to an auxiliary gradation given by $\tdeg u_{a,b}:=1$. So we assume that $\tdeg P=k\ge 1$.\\ 

For a vector $\oa=(a_1,\ldots,a_k)\in\mbZ^k$ denote $|\oa|:=\sum a_i$. Let
\begin{align*}
\mcP_k:=&\{\lambda=(\lambda_1,\ldots,\lambda_k)\in\mbZ^k_{\ge 0}|\lambda_1\ge\ldots\ge\lambda_k\},\\
\mcP_{k,q}:=&\{\lambda\in\mcP_k||\lambda|=q\}.
\end{align*}
The set $\mcP_{k,q}$ is endowed with the lexicographical order. We will use the standard notation 
$$
m_p(\lambda):=\sharp\{1\le i\le k|\lambda_i=p\},\quad\lambda\in\mcP_k,\quad p\ge 0.
$$
The sequence $(m_0(\lambda),m_1(\lambda),\ldots)$ uniquely determines $\lambda$ that justifies the notation
$$
\lambda=(0^{m_0(\lambda)}1^{m_1(\lambda)}\ldots).
$$\\

Introduce a basis $f_{\lambda}$, $\lambda\in\mcP_k$, in the space of symmetric polynomials $\mbC[x_1,\ldots,x_k]^{S_k}$ by
$$
f_\lambda(x_1,\ldots,x_k):=\frac{1}{k!}\sum_{\sigma\in S_k}x_{\sigma(1)}^{\lambda_1}\ldots x_{\sigma(k)}^{\lambda_k},\quad\lambda\in\mcP_k.
$$
For $\lambda\in\mcP_k$ we call a polynomial $f\in\mbC[x_1,\ldots,x_k]$ $\lambda$-symmetric, if it is invariant with respect to the permutation of any pair of variables $x_i$ and $x_j$, $i\ne j$, such that $\lambda_i=\lambda_j$. We also introduce a notation for the symmetrization of a polynomial $f\in\mbC[x_1,\ldots,x_k]$ with respect to the variables $x_1,\ldots,x_a$, $1\le a\le k$:
$$
\Sym_{x_1,\ldots,x_a}f:=\frac{1}{a!}\sum_{\sigma\in S_a}f(x_{\sigma(1)},\ldots,x_{\sigma(a)},x_{a+1},\ldots,x_k).
$$\\ 

Making the substitutions $u(x,y)=\sum_{\alpha\in\mbZ}u^\alpha(y)e^{i\alpha x}$ and $t\mapsto\frac{t}{i}$, the flows $\frac{\d}{\d t}$ and $\frac{\d}{\d\tau}$ can be written in the form
\begin{align*}
\frac{\d u^\alpha}{\d t}=&\sum_{\substack{\alpha_1,\alpha_2\in\mbZ\\\alpha_1+\alpha_2=\alpha}}\alpha_1 u^{\alpha_1} u^{\alpha_2},&&\alpha\in\mbZ,\\
\frac{\d u^\alpha}{\d\tau}=&\sum_{\lambda\in\mcP_{k,q}}\sum_{\substack{\alpha_1,\ldots,\alpha_k\in\mbZ\\\sum\alpha_i=\alpha}}P_{\lambda}(\alpha_1,\ldots,\alpha_k)u^{\alpha_1}_{\lambda_1}\ldots u^{\alpha_k}_{\lambda_k},&&\alpha\in\mbZ,
\end{align*}
where $P_\lambda$ is a $\lambda$-symmetric polynomial in $\alpha_1,\ldots,\alpha_k$ of degree $d$. Here $u^\alpha_a:=\d_y^a u^\alpha$.\\

The commutator of the flows $\frac{\d}{\d t}$ and $\frac{\d}{\d\tau}$ is given by
\begin{align}
\frac{\d}{\d t}\frac{\d u^\alpha}{\d\tau}-\frac{\d}{\d\tau}\frac{\d u^\alpha}{\d t}=&\sum_{\beta\in\mbZ,\,b\ge 0}\d_y^b\left(\sum_{\substack{\beta_1,\beta_2\in\mbZ\\\beta_1+\beta_2=\beta}}\beta_1 u^{\beta_1} u^{\beta_2}\right)\frac{\d}{\d u^\beta_b}\sum_{\lambda\in\mcP_{k,q}}\sum_{\substack{\alpha_1,\ldots,\alpha_k\in\mbZ\\\sum\alpha_i=\alpha}}P_{\lambda}(\alpha_1,\ldots,\alpha_k)u^{\alpha_1}_{\lambda_1}\ldots u^{\alpha_k}_{\lambda_k}\notag\\
&-\sum_{\substack{\alpha_1,\alpha_2\in\mbZ\\\alpha_1+\alpha_2=\alpha}}\alpha_1\left(\sum_{\lambda\in\mcP_{k,q}}\sum_{\substack{\beta_1,\ldots,\beta_k\in\mbZ\\\sum\beta_i=\alpha_1}}P_{\lambda}(\beta_1,\ldots,\beta_k)u^{\beta_1}_{\lambda_1}\ldots u^{\beta_k}_{\lambda_k}\right)u^{\alpha_2}\notag\\
&-\sum_{\substack{\alpha_1,\alpha_2\in\mbZ\\\alpha_1+\alpha_2=\alpha}}\alpha_1 u^{\alpha_1}\sum_{\lambda\in\mcP_{k,q}}\sum_{\substack{\beta_1,\ldots,\beta_k\in\mbZ\\\sum\beta_i=\alpha_2}}P_{\lambda}(\beta_1,\ldots,\beta_k)u^{\beta_1}_{\lambda_1}\ldots u^{\beta_k}_{\lambda_k}=\notag\\
=&\sum_{\lambda\in\mcP_{k+1,q}}\sum_{\substack{\alpha_1,\ldots,\alpha_{k+1}\in\mbZ\\\sum\alpha_i=\alpha}}Q_{\lambda}(\alpha_1,\ldots,\alpha_{k+1})u^{\alpha_1}_{\lambda_1}\ldots u^{\alpha_{k+1}}_{\lambda_{k+1}},\label{sum}
\end{align}
where a polynomial $Q_\lambda$ is $\lambda$-symmetric.\\

Let $\tlambda:=\max\{\lambda\in\mcP_{k,q}|P_\lambda\ne 0\}$ and $\hlambda:=(\tlambda_1,\ldots,\tlambda_k,0)\in\mcP_{k+1,q}$. We see that the sum in line~\eqref{sum} has the form
$$
\sum_{\substack{\alpha_1,\ldots,\alpha_{k+1}\in\mbZ\\\sum\alpha_i=\alpha}}Q_{\hlambda}(\alpha_1,\ldots,\alpha_{k+1})u^{\alpha_1}_{\hlambda_1}\ldots u^{\alpha_{k+1}}_{\hlambda_{k+1}}+\sum_{\substack{\lambda\in\mcP_{k+1,q}\\\lambda<\hlambda}}\sum_{\substack{\alpha_1,\ldots,\alpha_{k+1}\in\mbZ\\\sum\alpha_i=\alpha}}Q_{\lambda}(\alpha_1,\ldots,\alpha_{k+1})u^{\alpha_1}_{\lambda_1}\ldots u^{\alpha_{k+1}}_{\lambda_{k+1}},
$$
and 
\begin{align*}
&Q_{\hlambda}(\alpha_1,\ldots,\alpha_{k+1})=\\
=&\Sym_{\alpha_{s+1},\ldots,\alpha_{k+1}}\Bigg[\sum_{i=1}^k(\alpha_iP_{\tlambda}(\alpha_1,\ldots,\alpha_i+\alpha_{k+1},\ldots,\alpha_k)-\alpha_iP_\tlambda(\alpha_1,\ldots,\alpha_k))\\
&\hspace{2.7cm}+\sum_{i=1}^s\alpha_{k+1}P_\tlambda(\alpha_1,\ldots,\alpha_i+\alpha_{k+1},\ldots,\alpha_k)-\alpha_{k+1}P_{\tlambda}(\alpha_1,\ldots,\alpha_k)\Bigg],
\end{align*}
where $s:=\sharp\{1\le i\le k|\tlambda_i\ge 1\}$.\\

We decompose the polynomial $P_\tlambda$ in the following way:
$$
P_\tlambda(\alpha_1,\ldots,\alpha_k)=\sum_{\substack{\oa\in\mbZ^s_{\ge 0},\,\mu\in\mcP_{k-s}\\|\oa|+|\mu|=d}}C_{\oa,\mu}\alpha_1^{a_1}\ldots\alpha_s^{a_s}f_\mu(\alpha_{s+1},\ldots,\alpha_k),\quad C_{\oa,\mu}\in\mbC.
$$
Let
\begin{align*}
P^{(p)}_\tlambda(\alpha_1,\ldots,\alpha_k):=&\sum_{\substack{\oa\in\mbZ^s_{\ge 0},\,|\oa|=p\\\mu\in\mcP_{k-s,d-p}}}C_{\oa,\mu}\alpha_1^{a_1}\ldots\alpha_s^{a_s}f_\mu(\alpha_{s+1},\ldots,\alpha_k),&& p\ge 0,&&\\
P^{(p,\mu)}_\tlambda(\alpha_1,\ldots,\alpha_k):=&\left(\sum_{\oa\in\mbZ^s_{\ge 0},\,|\oa|=p}C_{\oa,\mu}\alpha_1^{a_1}\ldots\alpha_s^{a_s}\right)f_\mu(\alpha_{s+1},\ldots,\alpha_k),&& p\ge 0,&& \mu\in\mcP_{k-s,d-p}.
\end{align*}
Let $\td:=\max\left\{p\ge 0\left|P_\tlambda^{(p)}\ne 0\right.\right\}$ and $\tmu=(0^{m_0}1^{m_1}\ldots):=\max\left\{\mu\in\mcP_{k-s,d-\td}\left|P_\tlambda^{(\td,\mu)}\ne 0\right.\right\}$. Note that
$$
\Sym_{\alpha_{s+1},\ldots,\alpha_{k+1}}(f_\mu(\alpha_{s+1},\ldots,\alpha_k)\alpha_{k+1})=f_\hmu(\alpha_{s+1},\ldots,\alpha_{k+1}),
$$
where $\hmu:=(0^{m_0}1^{m_1+1}2^{m_2}\ldots)$. It is now easy to see that
$$
Q_\hlambda^{\td,\hmu}(\alpha_1,\ldots,\alpha_{k+1})=\left(d+s-1+\sum_{i\ge 2}m_i\right)\left(\sum_{\oa\in\mbZ^s_{\ge 0},\,|\oa|=\td}C_{\oa,\tmu}\alpha_1^{a_1}\ldots\alpha_s^{a_s}\right)f_\hmu(\alpha_{s+1},\ldots,\alpha_{k+1}),
$$
which is nonzero, because $d\ge 2$ and $P^{(\td,\tmu)}_\tlambda\ne 0$. This contradicts the assumption that the flows~$\frac{\d}{\d t}$ and~$\frac{\d}{\d\tau}$ commute. The lemma is proved.
\end{proof}
This completes the proof of the theorem.
\end{proof}

\end{document}